\title{ On the edge metric dimension for the random graph}
\author{Nina Zubrilina}
\date{\today}
\DeclareMathOperator\edim{edim}
\begin{document}
\maketitle
\newtheorem{theorem}{Theorem}[section]
\newtheorem{corollary}{Corollary}[theorem]
\newtheorem{lemma}[theorem]{Lemma}
\newtheorem{defn}{Definition}[section]
\newtheorem{remark}{Remark}[section]
\newtheorem*{claim}{Claim}
\newtheorem{prob}{Problem}[section]
\renewcommand{\theprob}{\arabic{prob}}
\newcommand\underrel[2]{\mathrel{\mathop{#2}\limits_{#1}}}
\newcommand\tab[1][0.5cm]{\hspace*{#1}}

\newcommand{\Prob}{\mathbb{P}}
\newcommand{\E}{\mathbb{E}}
\newcommand{\Q}{\mathbb{Q}}
\newcommand{\R}{\mathbb{R}}
\newcommand{\N}{\mathbb{N}}
\newcommand{\Z}{\mathbb{Z}}
\newcommand{\eps}{\varepsilon}
\newcommand{\norm}[1]{\left\lVert#1\right\rVert}
\newcommand\abs[1]{\left|#1\right|}

\begin{abstract}
Let $G(V, E)$ be a connected simple undirected graph. In this paper we prove that the edge metric dimension (introduced by Kelenc, Tratnik and Yero) of the Erd\H{o}s-R\'enyi random graph $G(n, p)$ is given by:
$$\edim(G(n, p)) = (1 + o(1))\frac{4\log(n)}{\log(1/q)},$$ where $q = 1 - 2p(1-p)^2(2-p)$.
\end{abstract}

\section{Introduction}
  
In \cite{bollobas2012metric},  Bollob{\'a}s, Mitsche and Pralat computed an asymptotic for $\dim(G(n, p))$ for a wide range of probabilities $p(n)$ as a function of $n$. For instance, for constant $p \in (0, 1)$, it was shown that 
$$\dim(G(n, p)) = (1 + o(1))\frac{2\log(n)}{\log(1/Q)},$$ where $Q = p^2 + (1-p)^2$. In this paper we generalize the methods and calculations made by Bollob{\'a}s, Mitsche and Pralat  in \cite{bollobas2012metric} to give an asymptotic for $\edim(G(n, p))$, which is a similar concept introduced by Kelenc, Tratnik and Yero in \cite{2016arXiv160200291K}. Namely, we show that $$\edim(G(n, p)) = (1 + o(1))\frac{4\log(n)}{\log(1/q)}, $$ where $q = 1 - 2p(1-p)^2(2-p)$.

Metric dimension was introduced by Slater in \cite{slater1975leaves} in 1975. The same idea was introduced independently by Harary and Melter in \cite{harary1976metric} a year later. Graphs with $\dim(G) = 1$ and $2$ were characterized in \cite{khuller1996landmarks}, and graphs with $\dim(G) = n-1 $ and $n-2$ were described in \cite{chartrand2000resolvability}. This graph invariant is useful in areas like robot navigation (\cite{khuller1996landmarks}), image processing (\cite{melter1984metric}), and chemistry (\cite{chartrand2000resolvability}, \cite{chartrand2000resolvability1}, \cite{johnson1993structure}) . 

The concept of edge metric dimension was introduced by Kelenc, Tratnik and Yero in \cite{2016arXiv160200291K} in 2016. They computed the edge metric dimension of a range of families of graphs and showed $\edim(G)$ can be less, equal to, or more than $\dim(G)$. They also showed computing $\edim(G)$ is NP-hard in general.

\section{Definitions}
Let $G(V, E)$ be a finite, simple, undirected graph. For $x, y \in E$, we define the distance $d(x, y)$ to be the length of the shortest path between $x$ and $y$. For $e = xy \in E$, $v \in V$, we define $d(e, v) = \min \{ d(x, v), d(y, v) \}$. 

Let $R = \{r_1, \ldots, r_{|R|}\} \subseteq V$. Let $d_R: V \cup E  \to \N^{|R|}$ via $(d_R(x))_i := d(x, r_i)$. We say $R$ distinguishes $v_1, v_2 \in V$ if $d_R(v_1) \neq d_R(v_2)$, and similarly that $R$ distinguishes  $e_1, e_2 \in E$ if $d_R(e_1) \neq d_R(e_2)$.

 A set $R \subseteq V$ is a $\textit{generating set}$ of a graph $G(V, E)$ if, for any two vertices $v_1$ and $v_2$, $d_R(v_1) \neq d_R(v_2)$. A generating set with the smallest number of elements is called a $\textit{basis}$ of $G$, and the number of elements in a basis is the $\emph{dimension}$ of $G$ (denoted $\dim(G)$). 

A set $R \subseteq V$ is an $\textit{edge generating set}$ of a graph $G(V, E)$ if for any two edges $e_1$ and $e_2$, $d_R(e_1) \neq d_R(e_2)$. An edge generating set with the smallest number of elements is called an $\textit{edge basis}$ of $G$, and the number of elements in the edge basis is the $\emph{edge dimension}$ of $G$ (denoted $\edim(G)$).

We say $f(n) = \mathcal{O}(g(n))$ if there exists a constant $C>0$ such that $\abs{f(n)} \leq C \abs{g(n)}$, and $f(n) = o(g(n))$ if $f = g \cdot o(1)$, where $o(1) \underset{n \to \infty}{\longrightarrow} 0$.

 We will say that a property holds \emph{asymptotically almost surely} (denoted a.a.s.) for the random graph if the probability that it holds for $G(n, p)$ goes to $1$ as $n$ goes to infinity. We denote probability with $\Prob$ and expected value with $\E$.  In this paper we will compute the edge dimension of $G(n, p)$, where $G(n, p)$ is the Erd\H{o}s-R\'enyi random graph on $n$ vertices with edge probability $p$, so that $\E\left[|E|\right] = np$. 
\section{The upper bound}
In this section we prove the following theorem: 
\begin{theorem}\label{upperbound}
$$\edim(G(n, p)) \leq (1 + o(1))\frac{4\log(n)}{\log(1/q)}, $$
where $q = 1 - 2p(1-p)^2(2-p)$.
\end{theorem}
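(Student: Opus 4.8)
\medskip
\noindent\textit{Proof sketch.}
The plan is to show, for any slowly vanishing $\eps=\eps(n)>0$, that a.a.s.\ $G(n,p)$ has an edge generating set of size $k:=\lceil(1+\eps)\tfrac{4\log n}{\log(1/q)}\rceil$, and then to let $\eps\to 0$. I would take $R$ to be a uniformly random $k$-subset of $V$ and bound $\Prob[R\text{ is not an edge generating set}]$ over the joint randomness of $G$ and $R$; if this is $o(1)$ then by Markov's inequality a good $R$ exists for all $G$ outside an $o(1)$-probability set. Since for constant $p$ the graph $G(n,p)$ a.a.s.\ has diameter $2$, I restrict to that event, on which $d(e,r)\in\{0,1,2\}$ for every edge $e=xy$ and vertex $r$, with $d(e,r)=0$ iff $r\in\{x,y\}$ and $d(e,r)=1$ iff $r\in(N(x)\cup N(y))\setminus\{x,y\}$. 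This turns ``$R$ distinguishes $e_1$ from $e_2$'' into a local condition on the edges incident to $R$ and to the endpoints of $e_1,e_2$, and lets me bound the failure probability by a union bound over unordered pairs $\{e_1,e_2\}$ of potential edges of the event $\{e_1,e_2\in E\text{ and }d_R(e_1)=d_R(e_2)\}$.

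The main term comes from generic pairs $e_1=u_1v_1$, $e_2=u_2v_2$ with $u_1,v_1,u_2,v_2$ distinct. If $R$ meets $\{u_1,v_1,u_2,v_2\}$, that landmark lies in exactly one of the two edges, so it has distance $0$ to that edge and distance $\ge 1$ to the other, and $R$ distinguishes the pair; hence I may assume $R$ avoids these four vertices. Then for each $r\in R$ the events $\{d(e_1,r)=1\}$ and $\{d(e_2,r)=1\}$ are independent, each of probability $1-(1-p)^2$; across distinct $r$ they involve disjoint sets of edges, so they are jointly independent and independent of $\{e_1,e_2\in E\}$. A single landmark fails to distinguish the pair precisely when $d(e_1,r)=d(e_2,r)$, which has probability $\bigl(1-(1-p)^2\bigr)^2+(1-p)^4=q$. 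Thus the generic contribution to the union bound is at most $\binom{n}{2}^2 p^2 q^{k}=O(n^4 q^{k})=O(n^{-4\eps})=o(1)$ for the chosen $k$, which is exactly where the constants $4$ and $q$ appear.

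The hard part will be the degenerate pairs, above all pairs of edges sharing a vertex, $e_1=uv$, $e_2=uw$ with $v\neq w$. Here the per-landmark behaviour is genuinely worse: any landmark in $N(u)$, and the landmark $u$ itself, has distance $1$ (respectively $0$) to both edges and is useless, so a landmark $r$ distinguishes $uv$ from $uw$ only if $r\in\{v,w\}$ or if $r\notin N(u)\cup\{u\}$ and $r\in N(v)\triangle N(w)$; the latter has probability only $2p(1-p)^2$, strictly less than the generic distinguishing probability $2p(1-p)^2(2-p)$. Equivalently, $R$ separates all edges at $u$ iff $R\setminus(N(u)\cup\{u\})$ resolves $N(u)$ by adjacency, a metric-dimension-type requirement imposed on $n$ overlapping neighbourhoods at once, and a plain union bound over the $\Theta(n^3)$ such pairs does not obviously suffice; this is where I expect the real work to be. I would attack it by first passing to the a.a.s.\ event that all degrees and all co-degrees $|N(v)\triangle N(w)|$ are concentrated and that $|R\cap(N(u)\cup\{u\})|$ is close to its mean for every $u$ (so $|R\setminus(N(u)\cup\{u\})|$ is large), and then estimating the probability that some $N(u)$ remains unresolved by a more careful second-moment / local-independence argument, in the spirit of the generic case rather than by a crude union bound. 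The other degenerate configurations — pairs with $R$ meeting their endpoints, or endpoints coinciding in other ways — are comparatively routine and go into lower-order error terms. Combining the generic estimate with the local one and letting $\eps\to 0$ as slowly as the errors allow yields an edge generating set of size $(1+o(1))\tfrac{4\log n}{\log(1/q)}$, proving the theorem.
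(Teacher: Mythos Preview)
Your framework---pick $R$ uniformly at random of size $k\sim(1+\eps)\,4\log n/\log(1/q)$ and bound, over the joint randomness of $G$ and $R$, the expected number of edge pairs that $R$ fails to distinguish---is exactly the paper's argument (Lemmas~3.2--3.4). For ``generic'' pairs on four distinct vertices your computation and the paper's coincide and give the contribution $O(n^4p^2q^{k})=o(1)$.

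Where you diverge from the paper is in the treatment of incident pairs $uv,uw$, and here you are more scrupulous than the paper. The paper simply notes (proof of Lemma~\ref{prob v not dist}) that there are only $O(n^3)$ such pairs against $\Theta(n^4)$ generic ones, so they do not move the single-vertex failure probability away from $q$; it then passes in Lemma~\ref{prob W not dist} to $\Prob(W\text{ fails on a random pair})\le(1+o(1))q^{|W|}$ and feeds this into Lemma~3.2 without further comment. You correctly isolate this step as the crux: for an incident pair the per-landmark failure probability is $q_1=1-2p(1-p)^2>q$, and once one takes $|W|$-th powers the $O(n^3)$ count no longer absorbs the gap. Concretely, the first-moment contribution of incident pairs is of order
\[
n^{3}p^{2}\,q_1^{\,k}\;=\;p^{2}\,n^{\,3-4(1+\eps)\log(1/q_1)/\log(1/q)},
\]
and for a whole range of $p$ this exponent is positive (e.g.\ at $p=\tfrac12$ one has $q_1=\tfrac34$, $q=\tfrac58$, and the exponent is about $+0.55$). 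So a plain union bound over incident pairs does \emph{not} close at $k\sim 4\log n/\log(1/q)$, and the paper's short proof, read literally, does not cover this case.

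The gap in your own proposal is that you flag this but do not resolve it. Your suggested remedy---concentrate $|R\setminus N[u]|$ and the co-degrees, then run a ``second-moment / local-independence'' argument---does not obviously help: for those $p$ the expected number $X$ of unresolved incident pairs already tends to infinity, and the second-moment method bounds $\Prob(X>0)$ from \emph{below}, not $\Prob(X=0)$; moreover the indicator events are positively associated through the shared neighbourhood $N(u)$, so one should not expect a Janson/Suen-type inequality to push $\Prob(X=0)$ towards $1$. Even localising to a single hub $u$ gives only $\Prob(Y_u>0)\le \E Y_u\asymp (pn)^2q_1^{k}$, which is $o(1)$ but not $o(1/n)$, so the union bound over $u$ still fails. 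In short, the paper's proof glosses over the incident-pair case, and your sketch recognises the difficulty but does not supply an argument that closes it; as written, neither yields a complete proof of the stated upper bound in the regime where $3\log(1/q)>4\log(1/q_1)$.
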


Before proving Theorem \ref{upperbound} we need some lemmas.

\begin{lemma}
Fix $\omega \in \{ 1, \ldots , n \} $.
Suppose for any two distinct uniformly random edges $e_1, e_2$ the following property holds: for a uniformly random subset $W \subseteq V$ of size $\omega$, 
$$ \Prob(W \text{does not distinguish } e_1, e_2) \leq 1/{n^4p^2}.$$ 
Then: $$\edim(G) \leq \omega.$$
\end{lemma}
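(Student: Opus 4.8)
The plan is to use the probabilistic method via a union bound over all pairs of edges, combined with a deletion/expectation argument. First I would observe that there are at most $\binom{|E|}{2} \le |E|^2$ pairs of distinct edges in $G$, and that $|E|$ is concentrated around $np^2/2 \cdot \binom{n}{2}$-type quantities; more precisely, a.a.s.\ $|E| \le n^2 p$ (indeed $|E| \le \binom{n}{2} \le n^2p / (2p) \cdot p$, and one can be generous here since we only need a polynomial bound). So the number of edge pairs is a.a.s.\ at most $n^4 p^2$.

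Next, fix the graph $G$ (condition on it) and let $W \subseteq V$ be a uniformly random subset of size $\omega$. For a fixed pair of distinct edges $e_1, e_2$, the hypothesis tells us that the probability (now over the choice of $W$, but one must be slightly careful: the stated hypothesis is phrased in terms of $e_1, e_2$ also being random — I would interpret and apply it in the form that the expected number of non-distinguished pairs is small) that $W$ fails to distinguish $e_1, e_2$ is at most $1/(n^4 p^2)$. Taking a union bound over all $\le n^4 p^2$ pairs, the probability that $W$ fails to distinguish \emph{some} pair is at most $n^4 p^2 \cdot 1/(n^4 p^2) = 1$, which is not quite enough on its own — so instead I would phrase this as: the expected number of pairs $\{e_1, e_2\}$ not distinguished by $W$ is at most $1$ (or use that the hypothesis has a little room, e.g.\ the bound is $o(1/(n^4p^2))$ in the intended application, or replace $n^4p^2$ by a slightly smaller count). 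Hence there exists a choice of $W$ of size $\omega$ distinguishing all but at most a constant number of edge pairs.

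Then I would finish by a deletion step: for each of the $O(1)$ remaining non-distinguished pairs, add one endpoint of one edge of the pair to $W$. This increases $|W|$ by only $O(1)$ — negligible compared to $\omega$ — and now $W$ distinguishes every pair of edges, so $W$ is an edge generating set, giving $\edim(G) \le \omega + O(1)$. Since the theorem we are building toward only needs the bound up to a $(1+o(1))$ factor and $\omega$ will be of order $\log n$, absorbing the additive $O(1)$ is harmless; alternatively, if one wants the clean statement $\edim(G) \le \omega$ exactly, one reruns the argument with $\omega$ replaced by $\omega - O(1)$, or notes that the hypothesis as literally stated (probability $\le 1/(n^4p^2)$ for \emph{every} pair after conditioning appropriately) already yields expected number of bad pairs strictly less than $1$ once the a.a.s.\ bound $|E| < n^2p$ is strict, forcing zero bad pairs.

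The main obstacle is the bookkeeping around what exactly is random in the hypothesis: the statement mixes "two distinct uniformly random edges" with "a uniformly random subset $W$," and to apply it cleanly I need to move to a statement about a fixed (but typical) graph and get a bound on $\E_W[\#\{\text{bad pairs}\}]$, then extract a single good $W$. Handling the $|E|$ concentration and making sure the union/expectation bound lands strictly below $1$ (or absorbing the additive constant into the $o(1)$) is the only real subtlety; everything else is a routine application of the first-moment method.
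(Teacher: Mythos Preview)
Your approach is essentially the paper's: apply the first-moment method to show that the expected number of edge pairs not distinguished by a random $W$ of size $\omega$ is strictly less than $1$, so some such $W$ must distinguish all pairs. The paper sidesteps all of your detours by simply observing that the expected number of unordered edge pairs is $\binom{\binom{n}{2}}{2}p^{2} < p^{2}n^{4}/8$ (rather than your cruder $p^{2}n^{4}$), so the expected number of bad pairs is at most $1/8$ and you are done immediately---no deletion step, no concentration of $|E|$, and no worrying about strict inequalities is needed.
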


\begin{proof}
We use the probabilistic method. Indeed, $$\E\left[\abs{E(G(n, p))}\right] = p {n \choose 2} < pn^2/2,$$ so  the expected number of distinct pairs of edges is no more than $ {pn^2/2 \choose 2} \leq p^2n^4/8$. Then by our hypothesis the expected number of pairs not distinguished by some $W \subseteq V$ with $|W| = \omega$ is less than $ p^2n^4/8p^2n^4 = 1/8$. Since this is strictly less than $1$, there must be at least one such set $W$ that distinguishes all the pairs. 
\end{proof}

 \begin{lemma}\label{prob v not dist}
 In   $G(n, p)$, the probability that a vertex $v$ doesn't distinguish two uniformly random edges $e_1, e_2$ is $(1 + o(1))q$, where $q = 1 - 2p(1-p)^2(2-p)$.
 \end{lemma}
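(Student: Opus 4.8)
The plan is to reduce to a \emph{generic} vertex configuration and then run a short case analysis on how far $v$ is from each of the two edges. Write $e_1 = x_1 y_1$ and $e_2 = x_2 y_2$. First I would note that, since $G(n,p)$ a.a.s.\ has $\Theta(n^2)$ edges, with probability $1 - \mathcal{O}(1/n)$ the five vertices $v, x_1, y_1, x_2, y_2$ are pairwise distinct; the non-generic configurations (where $e_1,e_2$ share a vertex, or $v$ is an endpoint of one of them) occur with probability $\mathcal{O}(1/n)$, and since $q$ is a fixed positive constant they contribute only $o(1)$ to the probability we are computing. So it suffices to evaluate $\Prob(d(e_1,v) = d(e_2,v))$ under the generic configuration. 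Here I also use that $v$ may be taken fixed (the law of $G(n,p)$ is invariant under relabelling vertices), and that conditioning on the two sampled vertex-pairs $x_1y_1$ and $x_2y_2$ being edges of $G$ does not affect the status of any other pair (edges of $G(n,p)$ are independent; the mild reweighting from choosing $e_1,e_2$ uniformly among the $|E|$ edges is a $1 + o(1)$ factor by concentration of $|E|$).

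For the generic configuration, observe that $d(e_i, v) \ge 1$ since $v \notin e_i$, and that $d(e_i, v) = 1$ precisely on the event $A_i := \{v \sim x_i \text{ or } v \sim y_i\}$, which has probability $1 - (1-p)^2 = p(2-p)$. The events $A_1$ and $A_2$ depend on the disjoint pair-sets $\{vx_1, vy_1\}$ and $\{vx_2, vy_2\}$, so they are independent of each other and of the conditioning that $x_1y_1, x_2y_2 \in E$. Now split into three cases. On $A_1 \cap A_2$, of probability $p^2(2-p)^2$, both distances equal $1$ and $v$ fails to distinguish $e_1, e_2$. On the event that exactly one of $A_1, A_2$ holds, one distance is $1$ and the other is at least $2$, so $v$ does distinguish. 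On $\overline{A_1} \cap \overline{A_2}$ — i.e.\ $v$ is non-adjacent to all of $x_1, y_1, x_2, y_2$ — which has probability $(1-p)^4$, both distances are at least $2$.

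It remains to show that, conditioned on $\overline{A_1} \cap \overline{A_2}$, we have $d(e_1, v) = d(e_2, v) = 2$ asymptotically almost surely. For this I would bound $d(x_i, v)$ from above: the number of common neighbours of $v$ and $x_i$ among the $n - 5$ vertices outside $\{v, x_1, y_1, x_2, y_2\}$ is distributed as $\mathrm{Bin}(n-5, p^2)$, and — crucially — this count involves only pairs of the form $wv$ and $wx_i$ with $w$ outside the special set, which are disjoint from the (at most six) pairs on which we have conditioned, so it is independent of all the conditioning; hence it is positive with probability $1 - (1-p^2)^{n-5} = 1 - o(1)$, and on that event $d(x_i, v) = 2$ (it is $\ge 2$ on $\overline{A_i}$), so $d(e_i,v) = 2$. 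A union bound over $i=1,2$ finishes this. Assembling the cases, the probability that $v$ does not distinguish $e_1, e_2$ is $p^2(2-p)^2 + (1-p)^4 + o(1)$, and a direct expansion gives $p^2(2-p)^2 + (1-p)^4 = 1 - 4p + 10p^2 - 8p^3 + 2p^4 = 1 - 2p(1-p)^2(2-p) = q$, which is the claim. The work here is essentially bookkeeping rather than one hard estimate: the main things to be careful about are that the various conditionings (generic configuration, $x_1y_1, x_2y_2 \in E$, and $\overline{A_1}\cap\overline{A_2}$) stay compatible with the independence used in the common-neighbour count, and that the $o(1)$ errors — the non-generic configurations, and the rare event that some $d(\cdot,v) \ge 3$ — are genuinely negligible against the constant $q$.
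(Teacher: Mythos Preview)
Your proof is correct and follows essentially the same approach as the paper: reduce to the generic configuration of five distinct vertices, use that distances in $G(n,p)$ are a.a.s.\ equal to $1$ or $2$, and run a short case analysis to arrive at the same polynomial identity $p^2(2-p)^2 + (1-p)^4 = q$. The only cosmetic differences are that you compute $\Prob(\text{not distinguished})$ directly via the edge-level events $A_1, A_2$ (the paper instead enumerates vertex-distance tuples $(d(v,x),d(v,y),d(v,z),d(v,t))$ to get $\Prob(\text{distinguished}) = 1-q$), and you replace the paper's citation of the diameter-$2$ fact with an explicit common-neighbour binomial count --- your version is, if anything, a bit more careful about the independence and conditioning bookkeeping.
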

\begin{proof}
There are two types of distinct edge pairs: 

1. $ab, bc$ for some $a, b, c \in V$.

2. $ab, cd$ for $a, b, c, d \in V$ and $\{a, b \} \bigcap \{c, d \} = \emptyset$.\\
Note that 
$$\text{the expected number of type $2$ pairs} = 3{n \choose 4}p^2 = \frac{n^4p^2}{8}(1 + o(1)), $$
and 
$$\text{the expected number of type $1$ pairs} \leq n^3 = o\left(\frac{n^4p^2}{8}\right). $$
 Thus, we can neglect the type $1$ pairs. Let $xy, zt$ be a type 2 pair and $v$ a uniformly random vertex. Clearly, $\Prob(v \in \{ x, y, z, t \} ) = o\left(\frac{n^4p^2}{8}\right)$, so we can assume $v$ is not a vertex of $xy$ or $zt$. Since the random graph has diameter $2$ a.a.s. (see \cite{moon1966matrix}), $v$ has distance $1$ or $2$ to $x, y, z, t$ a.a.s.; moreover, $\Prob(d(v, x) = 1) = p$, so a.a.s. $\Prob(d(v, x) = 2) = 1-p$. It is easy to see that $v$ has distance $1$ to $xy$ and $2$ to $zt$ if and only if one of the following cases holds:
\begin{align*}
&1.\ (d(v, x), d(v, y), d(v, z), d(v, t)) = (1, 1, 2, 2) \text{ (with probability }p^2(1-p)^2).\\
&2.\ (d(v, x), d(v, y), d(v, z), d(v, t)) = (1, 2, 2, 2) \text{ (with probability }p(1-p)^3).\\
&3.\ (d(v, x), d(v, y), d(v, z), d(v, t)) = (2, 1, 2, 2)\text{ (with probability }p(1-p)^3).\\
\end{align*}
The same probabilities hold for $xy$ and $zt$ switched. Thus, a.a.s.
\begin{align*}
\Prob(v \text{ distinguishes } xy, zt) &= (1 + o(1))\cdot 2(p^2(1-p)^2 + 2p(1-p)^3) \\
&= (1  + o(1))\cdot 2p(1-p)^2(2-p)= (1+o(1))(1-q). 
\end{align*} This gives us the desired result.
\end{proof} 

\begin{lemma}\label{prob W not dist}
Consider a uniformly random subset $W \subseteq V(G(n, p))$ with $$|W| = (1 + o(1))\frac{4\log(n)}{\log(1/q)}.$$ Then for uniformly random $e_1$ and $e_2 \in E(G(n, p))$, $$\Prob(W\text{ does not distinguish } e_1, e_2) \leq (1 + o(1))/n^4p^2.$$ 
\end{lemma}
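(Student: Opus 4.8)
The plan is to leverage Lemma~\ref{prob v not dist} together with an approximate independence argument across the vertices of $W$. Fix a type~2 pair $e_1 = xy$, $e_2 = zt$ (type~1 pairs can be neglected by the same expectation count as in Lemma~\ref{prob v not dist}, since their contribution is $o(n^4 p^2 / 8)$ and hence they contribute a vanishing fraction of all pairs). Condition on the a.a.s.\ event that $G(n,p)$ has diameter $2$, so that every $w \in V \setminus \{x,y,z,t\}$ lies at distance $1$ or $2$ from each of $x,y,z,t$. For a single uniformly random $w$, Lemma~\ref{prob v not dist} gives $\Prob(w \text{ does not distinguish } e_1,e_2) = (1+o(1))q$. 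If the events ``$w_i$ does not distinguish $e_1, e_2$'' over the $\omega = |W|$ vertices $w_i \in W$ were exactly independent, then $\Prob(W \text{ does not distinguish } e_1, e_2)$ would be $\big((1+o(1))q\big)^{\omega} = (1+o(1)) q^{\omega}$, and plugging in $\omega = (1+o(1)) \frac{4\log n}{\log(1/q)}$ yields $q^{\omega} = (1+o(1)) n^{-4}$, which is even a touch stronger than the claimed $(1+o(1))/(n^4 p^2)$. So the heart of the matter is controlling the dependence.

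The key observation that tames the dependence is that whether $w$ distinguishes $e_1,e_2$ depends only on the $4$-bit vector $(\mathbf{1}[d(w,x)=1], \mathbf{1}[d(w,y)=1], \mathbf{1}[d(w,z)=1], \mathbf{1}[d(w,t)=1])$ — that is, on the adjacency status of $w$ to the four fixed vertices $x,y,z,t$ — once we are on the diameter-$2$ event. These indicator bits are \emph{mutually independent} across distinct $w \in W$, because they involve disjoint sets of potential edges $\{w x, w y, w z, w t\}$ for distinct $w$; the only coupling is through the global diameter-$2$ conditioning. So I would first compute, for a vertex $w$ disjoint from $\{x,y,z,t\}$, the exact (unconditioned) probability $r := \Prob(w \text{'s bit-vector lies in the ``non-distinguishing'' set})$. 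Reading off the cases in Lemma~\ref{prob v not dist}: $w$ distinguishes $e_1,e_2$ exactly when its bit-vector, or the one with $\{xy\}$ and $\{zt\}$ roles swapped, is $(1,1,0,0)$, $(1,0,0,0)$, or $(0,1,0,0)$; summing the corresponding binomial-type probabilities $p^2(1-p)^2 + 2p(1-p)^3$ and its mirror gives the ``distinguishing'' probability $2p(1-p)^2(2-p) = 1-q$ \emph{exactly}, so that the unconditioned non-distinguishing probability (ignoring whether $w$ is at distance $>2$) satisfies $r \le q + \Prob(\text{some } d(w,\cdot) > 2)$, and $r \ge q - \Prob(\text{some } d(w,\cdot) > 2)$, where $\Prob(d(w, x) > 2)$ is exponentially small in $n$. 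Hence $r = q + n^{-\Omega(1)} = (1+o(1))q$ as well, but now I have a genuinely product structure: over any fixed set $S$ of $\omega$ vertices disjoint from $\{x,y,z,t\}$, $\Prob(\text{every } w \in S \text{ fails to distinguish}) = r^{\omega}$ exactly, with no conditioning needed, since the events are independent.

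Assembling these pieces: for a uniformly random $W$ of size $\omega$, with probability $1 - o(1)$ we have $W \cap \{x,y,z,t\} = \emptyset$ (the complementary event has probability $O(\omega/n) = o(1)$ and contributes at most $o(1)$, which is absorbed into the $(1+o(1))$ factor once we check it is $o(1/(n^4p^2))$ — here one should be mildly careful, and it may be cleanest to instead split off at most $4$ of the $\omega$ vertices and apply the independence argument to the remaining $\ge \omega - 4$, which changes $r^\omega$ to $r^{\omega-4} = (1+o(1)) q^{\omega}$ since $q^{-4}$ is a constant). On that event, $\Prob(W \text{ does not distinguish } e_1, e_2 \mid W, e_1, e_2) \le r^{\,\omega - 4} = (1+o(1)) q^{\omega - 4} = (1+o(1)) q^{\omega}$. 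Averaging over $e_1, e_2$ and over $W$, and using $\omega = (1+o(1))\frac{4\log n}{\log(1/q)}$ so that $q^{\omega} = n^{-4(1+o(1))} = (1+o(1)) n^{-4} \le (1+o(1))/(n^4 p^2)$ once the $o(1)$ in the exponent is pushed to make the bound hold (for constant $p$, $1/p^2$ is a constant, and for $p \to 0$ one absorbs it by a slightly larger $o(1)$ in $\omega$), gives the claim. The main obstacle is bookkeeping the error terms cleanly: the diameter-$2$ conditioning, the $o(1)$ correction in Lemma~\ref{prob v not dist} being \emph{per vertex} versus needing a uniform-in-$\omega$ version when we take a product over $\omega = \Theta(\log n)$ vertices (this is exactly why the exact-computation route for $r$, giving $r = q + n^{-\Omega(1)}$, is important — a naive $(1+o(1))q$ raised to the $\Theta(\log n)$ power is \emph{not} $(1+o(1))q^{\omega}$ in general), and handling the overlap of $W$ with $\{x,y,z,t\}$.
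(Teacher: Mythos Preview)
Your approach is essentially the same as the paper's --- use Lemma~\ref{prob v not dist} and multiply over the vertices of $W$ via independence --- and it is correct. The paper's own proof is in fact terser: it simply asserts $\Prob(W\text{ does not distinguish }e_1,e_2)\le (1+o(1))\bigl[\Prob(v\text{ does not distinguish }e_1,e_2)\bigr]^{|W|}$ and plugs in, without spelling out the product structure; your write-up is more careful on exactly the points you flag (independence via disjoint edge sets $\{wx,wy,wz,wt\}$, the danger of raising a per-vertex $(1+o(1))$ to a $\Theta(\log n)$ power, and the overlap $W\cap\{x,y,z,t\}$), which is to your credit.
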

\begin{proof}
Using Lemma \ref{prob v not dist}, we see that
 \begin{align*}
&\Prob(W \text{doesn't distinguish }e_1, e_2)  \\
\leq &(1 + o(1))\Prob(\text{uniformly random}\text{ vertex }v  \text{ doesn't distinguish }e_1, e_2)^{|W|}  \\  
\leq &(1 + o(1)) q^{(1+o(1))\frac{4\log(n)}{\log(1/q)}} \\ 
 = &(1+o(1))q^{-\log_q(n^4)}  \\ 
= &  (1+o(1))\frac{1}{n^4}  \\
 \leq &(1+o(1)) \frac{1}{p^2n^4}.
\end{align*}

\end{proof}

\begin{proof}[Proof of Theorem \ref{upperbound}]
Combining Lemma \ref{prob W not dist} and 3.2, we see that an edge basis of $G(n, p)$ has cardinality at most 
$$(1 + o(1))\frac{4\log(n)}{\log(1/q)},$$
 which concludes the proof of Theorem \ref{upperbound}.
\end{proof}

\section{The lower bound}
The goal of this section is to prove the following theorem: 
\begin{theorem}\label{lower bound}
For the random graph $G(n, p)$ we have 

$$\edim(G(n, p)) \geq (1 + o(1))\frac{4\log(n)}{\log(1/q)}, $$
where $q = 1 - 2p(1-p)^2(2-p)$.
\end{theorem}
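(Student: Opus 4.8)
The plan is to establish the lower bound by a first-moment / union-bound argument showing that any sufficiently small vertex set fails to be an edge generating set a.a.s. Set $k = (1-\eps)\frac{4\log n}{\log(1/q)}$ for a fixed small $\eps > 0$; it suffices to show that a.a.s. \emph{every} $R \subseteq V$ with $|R| = k$ fails to distinguish some pair of edges, and then let $\eps \to 0$. The natural strategy, mirroring Bollob\'as--Mitsche--Pra\l at, is: for a fixed candidate set $R$, exhibit many ``candidate'' pairs of edges whose distance vectors to $R$ have a good chance of colliding, show these near-independent events give a collision with probability so close to $1$ that it survives a union bound over all $\binom{n}{k}$ choices of $R$.

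Here is the order I would carry it out. First, fix $R$ with $|R|=k$ and restrict attention to vertices outside $R$; since the random graph has diameter $2$ a.a.s., for $v \notin R$ the vector $d_R(v) \in \{1,2\}^k$ is essentially determined by the adjacency pattern between $v$ and $R$, which is a product of independent $\mathrm{Bernoulli}(p)$ coordinates. Second, I would set up the collision event at the level of edges: for two disjoint edges $e_1 = xy$, $e_2 = zt$ with all four endpoints outside $R$, the event ``$R$ does not distinguish $e_1, e_2$'' is, coordinate by coordinate, the event that $\min(d(x,r),d(y,r)) = \min(d(z,r),d(t,r))$ for every $r \in R$; by Lemma~\ref{prob v not dist} each coordinate collides with probability $(1+o(1))q$, and across the $k$ coordinates these are independent (they depend on disjoint sets of edges, namely the edges from $r$ to $\{x,y,z,t\}$), so a \emph{fixed} disjoint pair collides with probability $(1+o(1))q^k = n^{-4(1-\eps)(1+o(1))}$. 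Third — the crucial step — I would choose a large family of pairwise ``edge-disjoint-in-the-relevant-sense'' candidate pairs: take roughly $n/5$ disjoint $4$-tuples of vertices outside $R$, require each $4$-tuple to actually host a disjoint edge pair $xy, zt$ (probability $\Theta(p^2)$ each, independently across tuples), and among those that do, ask for the collision event. Because distinct $4$-tuples involve disjoint vertex sets, the ``hosts a pair and the pair collides'' events are independent across tuples, each occurring with probability $\Theta(p^2) \cdot n^{-4(1-\eps)(1+o(1))} \geq n^{-4+3\eps}$ for $n$ large. Hence the probability that \emph{no} tuple yields an undistinguished pair is at most $(1 - n^{-4+3\eps})^{\Theta(n)} \le \exp(-\Theta(n^{-3+3\eps}))$, which is $\exp(-n^{\Omega(1)})$ once $\eps > 0$. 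Finally, union bound: $\binom{n}{k} \le n^k = \exp(O(\log^2 n))$, which is dwarfed by $\exp(n^{\Omega(1)})$, so a.a.s. no $R$ of size $k$ is an edge generating set; therefore $\edim(G(n,p)) \ge k = (1-\eps)\frac{4\log n}{\log(1/q)}$, and letting $\eps \to 0$ finishes the proof.

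The main obstacle I anticipate is making the conditioning rigorous: the diameter-$2$ statement and the $(1+o(1))$ factors in Lemma~\ref{prob v not dist} are statements about \emph{typical} behavior, but in the lower-bound union bound we quantify over all $R$ simultaneously, so I must be careful that the ``bad'' events I discard (some $d(v,r) > 2$, or an endpoint lands in $R$, or the $o(1)$ error) are controlled uniformly and do not interact with the choice of $R$. The clean way is to work on the a.a.s. event $\{\mathrm{diam}(G) = 2\}$ (which is independent of the subsequent argument in the sense that it only helps), and to note that for $v \notin R$ the event $d(v,r)=1$ versus $d(v,r)=2$ is \emph{exactly} determined by whether $vr \in E$ together with the existence of a common neighbor — and on the diameter-$2$ event the latter is automatic when $vr \notin E$, so $d_R(v)$ is genuinely a function of the independent edges $\{vr : r \in R\}$, with no $o(1)$ slack at all. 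A secondary technical point is handling the requirement that all four endpoints of a candidate pair avoid $R$: since $|R| = O(\log n)$ and we only need $\Theta(n)$ disjoint $4$-tuples, we can simply fix in advance $\lfloor (n - k)/4 \rfloor$ disjoint $4$-element subsets of $V \setminus R$, so this costs nothing. Once these bookkeeping issues are settled, the computation of the exponents is routine and the gap between $\exp(O(\log^2 n))$ and $\exp(n^{\Omega(1)})$ gives ample room.
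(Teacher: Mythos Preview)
Your third step contains a fatal arithmetic error that reflects a real conceptual gap. You correctly compute that a fixed disjoint pair collides with probability $(1+o(1))q^k = n^{-4(1-\eps)(1+o(1))}$, and then you take $\Theta(n)$ vertex-disjoint $4$-tuples. But then the expected number of collisions among your $\Theta(n)$ independent trials is only
\[
\Theta(n)\cdot \Theta(p^2)\cdot n^{-4+4\eps} \;=\; \Theta\bigl(n^{-3+4\eps}\bigr),
\]
which tends to $0$ for every $\eps<3/4$. Consequently
\[
(1-n^{-4+3\eps})^{\Theta(n)} \;\le\; \exp\bigl(-\Theta(n^{-3+3\eps})\bigr) \;\longrightarrow\; 1,
\]
not $0$: the exponent $-3+3\eps$ is \emph{negative}, so this is certainly not $\exp(-n^{\Omega(1)})$ as you claim. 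In other words, with only $\Theta(n)$ independent candidate pairs you do not expect even a single collision, so you cannot conclude that $R$ fails to be edge-resolving, let alone beat the union bound over $\binom{n}{k}$ choices of $R$.

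The remedy is exactly the mechanism the paper uses. One must work with essentially all $\Theta(n^4)$ ordered pairs of disjoint edges so that the expected number of collisions $\mu$ is roughly $n^4p^2q^r/8$, which the paper arranges (via a vanishing $\eps = 3\log\log n/\log n$) to be of order $(\log n)^3$. These $\Theta(n^4)$ events are not independent---pairs sharing a vertex are correlated---so a naive product bound or even Chebyshev is not strong enough to survive the $\binom{n}{r}$ union bound. The paper therefore applies Suen's inequality (Janson's version), computing the dependency quantities $\Delta$ and $\delta$, showing $\Delta = o(\mu)$ and $\delta = o(1)$, and hence obtaining the exponentially small bound $\Prob(S=0)\le \exp(-(1+o(1))\mu)$. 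That exponential decay in $\mu\sim(\log n)^3$ is precisely what defeats the $\exp(O(\log^2 n))$ coming from the union bound. Your independence-by-disjointness shortcut throws away the factor of $n^3$ in the count that makes $\mu$ large, and without it the argument cannot close.
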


Let $$\eps := \frac{3\log\log(n)}{\log(n)} = o(1).$$
We will show that a.a.s. there is no edge generating set $R$ of cardinality less than
\begin{gather*}
r:= \frac{(4-\eps)\log(n)}{\log(1/q)}.
\end{gather*}
To do that we will use a theorem which is a version of Suen's inequality provided by Janson in \cite{janson1998new}. First we introduce some notation:
\begin{itemize}
\item  $\{I_i\}_{i \in \mathcal{I}}$ --- a finite family of indicator random variables;
\item  $\Gamma$ --- the associated dependency graph ($\mathcal{I}$ is the set of vertices of $\Gamma$);
\item For $i, j \in \mathcal{I}, \text{ write }i \sim j$ if $i, j$ are adjacent in $\Gamma$;
\item $ \mu: = \sum_i \Prob(I_i = 1)$
\item $\Delta: = \sum_{i \sim j} \E[I_i I_j]$
\item $\delta: = \max_i{\sum\limits_{i \sim j} \Prob(I_j)}$
\item $S := \sum\limits_{i} I_i$
\end{itemize}
\begin{theorem}[Suen's inequality: Theorem 2 of \cite{janson1998new}]
$$\Prob(S = 0) \leq \exp( - \mu + \Delta\eps^{2\delta})$$
\end{theorem}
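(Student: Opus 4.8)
The plan is to prove Suen's inequality itself, namely that $\Prob(S = 0) \leq \exp(-\mu + \Delta e^{2\delta})$ for a family of indicator variables with dependency graph $\Gamma$. (I read the stated exponent $\Delta\eps^{2\delta}$ as $\Delta e^{2\delta}$, the correct form of Suen's bound, and will prove that version.) The overall strategy is the exponential-moment/martingale-free approach that Janson uses: fix an arbitrary ordering of the index set $\mathcal{I}$ and track how the partial products $\prod (1 - I_i)$ evolve, controlling the correction terms coming from the dependency structure through the quantities $\Delta$ and $\delta$.

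First I would set up the key telescoping identity. Write $p_i := \Prob(I_i = 1)$ and note that $\Prob(S = 0) = \E\bigl[\prod_{i \in \mathcal{I}} (1 - I_i)\bigr]$. The heart of the argument is to compare this with the "independent" product $\prod_i (1 - p_i) \leq \exp(-\sum_i p_i) = e^{-\mu}$; the correction is exactly where the dependency graph enters. Following Janson, I would introduce for each $i$ the conditional expectation obtained by exposing the indices one at a time and bound the ratio between exposing $I_i$ in the true correlated model versus the independent model. The plan is to show, via an inductive/telescoping estimate, that each dependency edge contributes a multiplicative factor controlled by $\E[I_i I_j]$, while the self-boosting of conditioning on neighbors already set to zero is controlled by the factor $e^{2\delta}$ coming from $\delta = \max_i \sum_{j \sim i} p_j$.

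Concretely, the key steps in order are: (i) establish the representation $\Prob(S=0) = \E[\prod_i(1-I_i)]$ and the target comparison baseline $e^{-\mu}$; (ii) order $\mathcal{I}$ and write $\Prob(S=0)$ as a product of conditional factors $\E[(1-I_i) \mid I_j = 0,\ j < i]$, using that on the event $S=0$ all earlier indicators vanish; (iii) bound each conditional factor by $(1 - p_i)$ times a correction that, when expanded, produces terms of the form $\Cov(I_i, I_j)$ for $j \sim i$, since $I_i$ is independent of the non-neighboring $I_j$ by the dependency-graph property; (iv) collect the negative part into $\prod(1-p_i) \le e^{-\mu}$ and sum the correction terms into $\Delta = \sum_{i \sim j} \E[I_i I_j]$; and (v) absorb the repeated conditioning loss — where exposing neighbors biases $I_i$ upward — into the factor $e^{2\delta}$, using $1 - x \geq e^{-x - x^2}$ type inequalities valid because $\delta$ bounds the total neighbor-probability mass at any vertex. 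Assembling these yields $\Prob(S=0) \leq e^{-\mu} \cdot e^{\Delta e^{2\delta}}$, which is the claim.

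The hard part will be step (iii)–(v): making the dependency-graph independence precise when passing to conditional expectations, and showing that the accumulated conditioning corrections telescope cleanly into $\Delta e^{2\delta}$ rather than something larger. The subtlety is that conditioning on $\{I_j = 0 : j < i\}$ can only be analyzed locally using neighbors of $i$, and one must argue carefully that non-adjacent conditioning events do not distort $\E[(1-I_i)\mid \cdots]$; this is exactly where the definition of the dependency graph $\Gamma$ is used, and where the exponential factor $e^{2\delta}$ must be extracted via a convexity/second-order Taylor estimate rather than a crude union bound. I would handle this by following Janson's inductive scheme in \cite{janson1998new}, isolating a single vertex's contribution and bounding its conditional factor uniformly, then iterating; the bookkeeping of which covariance terms survive is routine once the locality is established, so the genuine obstacle is the clean extraction of the $e^{2\delta}$ correction.
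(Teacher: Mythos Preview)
The paper does not prove this statement at all: Theorem~4.2 is quoted verbatim from Janson's paper \cite{janson1998new} and used as a black box in the proof of Theorem~\ref{lower bound}. There is no ``paper's own proof'' to compare against, so your proposal is not so much an alternative as an attempt to supply a proof where none was intended. You are right that the displayed bound contains a typo --- the paper's $\eps$ is the specific quantity $3\log\log(n)/\log(n)$ defined a few lines later, which makes no sense inside Suen's inequality; the intended exponent is $e^{2\delta}$, as you inferred.

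As for the sketch itself, the broad architecture you describe --- order the indices, telescope $\Prob(S=0)$ into conditional factors, use the dependency-graph property to localize each factor to neighbors, and extract the $e^{2\delta}$ correction from the conditioning distortion --- is indeed the shape of Janson's argument. But your steps (iii)--(v) are still only a plan, not a proof: the actual work in \cite{janson1998new} is a careful inequality comparing $\E[(1-I_i)\prod_{j<i}(1-I_j)]$ to $(1-p_i)\E[\prod_{j<i}(1-I_j)]$ via an interpolation over the neighbors of $i$, and the $e^{2\delta}$ factor emerges from bounding a ratio of such partial products, not from a generic Taylor estimate. If you want to include a self-contained proof you will need to write out that interpolation explicitly; otherwise, citing Janson as the paper does is entirely standard.
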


We now apply this theorem to our problem.

Fix $R \subseteq V(G(n, p))$ with $|R| = r$. 
Let $$\mathcal{I} := \{ (xy, zt)\vert xy, zt  \in E(G(n,p)), xy \neq zt\}$$ be the set of pairs of pairs of distinct edges, and for any $(xy, zt) \in \mathcal{I}$ let $A_{xy, zt}$ to be the event $d_R(xy) = d_R(zt)$ (with $I_{xy, zt}$ being the corresponding indicator function). Let $S = \sum_{(xy, zt) \in \mathcal{I}}I_{xy, zt}$. Then $$\Prob(R\text{ is an edge generating set}) = \Prob(S = 0).$$
The associated dependency graph has $\mathcal{I}$ as vertices and $(x_1y_1, z_1t_1) \sim  (x_2y_2, z_2t_2)$ if and only if $\{ x_1, y_1, z_1, t_1 \} \cap \{ x_2, y_2, z_2, t_2 \} \neq \emptyset $ (here, again, $\sim$ denotes adjacency). Then by Theorem 4.2, 
\begin{gather}
\Prob(S = 0) \leq \exp( - \mu + \Delta\eps^{2\delta}), 
\end{gather}
where 
\begin{gather*}
\mu := \sum_{(e, f) \in \mathcal{I}} \Prob(A_{e, f}),\\
\Delta: = \sum_{(e_1,f_1) \sim (e_2, f_2)} \E[I_{e_1f_1} I_{e_2f_2}],\\
\delta := \underset{(e_1, f_1) \in \mathcal{I}}{\max} \sum_{(e_2, f_2) \sim (e_1, f_1)} \Prob(A_{e_2, f_2}).
\end{gather*}

We now show the following estimate for $\mu$:
\begin{lemma}[Evaluation of $\mu$]
\begin{gather*}
\mu =(1 + o(1))p^2 n^{\eps} /8.
\end{gather*}
\end{lemma}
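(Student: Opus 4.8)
The plan is to compute $\mu = \sum_{(e,f) \in \mathcal{I}} \Prob(A_{e,f})$ by splitting the sum according to the two types of edge pairs identified in the proof of Lemma~\ref{prob v not dist}. Recall $A_{e,f}$ is the event that $d_R(e) = d_R(f)$, i.e.\ that the \emph{fixed} set $R$ of size $r$ fails to distinguish the pair $(e,f)$. First I would recall that the expected number of type~1 pairs (sharing a vertex) is $\mathcal{O}(n^3)$, which will turn out to be negligible against the main term $\Theta(p^2 n^\eps)$ since $n^\eps = n^{o(1)}$ grows slower than any power of $n$ and $p$ is a constant; so I discard type~1 pairs at the cost of a $(1+o(1))$ factor, exactly as in Lemma~\ref{prob v not dist}.

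For a type~2 pair $(xy, zt)$ with all four vertices distinct, I would write $\Prob(A_{xy,zt}) = \prod_{w \in R} \Prob(w \text{ does not distinguish } xy, zt)$, using that conditioned on the four vertices of the two edges, the events ``$w$ does not distinguish'' are independent across $w \in R$ (the relevant randomness is in disjoint sets of potential edges incident to each $w$, together with the a.a.s.\ diameter-$2$ fact that lets us reduce distances to the values $1,2$). By Lemma~\ref{prob v not dist} each factor is $(1+o(1))q$, so $\Prob(A_{xy,zt}) = (1+o(1))q^{r}$. Plugging in $r = (4-\eps)\log n / \log(1/q)$ gives $q^r = q^{-\log_q(n^{4-\eps})} = n^{-(4-\eps)} = n^{\eps}/n^4$. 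Then, since the number of type~2 pairs is $(1+o(1)) p^2 n^4 / 8$ in expectation, I would conclude $\mu = (1+o(1)) \cdot \tfrac{p^2 n^4}{8} \cdot \tfrac{n^\eps}{n^4} = (1+o(1)) p^2 n^\eps / 8$.

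There are two points that need care. First, the independence-across-$w\in R$ claim is only clean if we have already conditioned on the diameter being $2$ and on no vertex of $R$ coinciding with $x,y,z,t$; handling the conditioning so that it only contributes a $(1+o(1))$ multiplicative error — uniformly over the roughly $n^4$ type~2 pairs and over the fixed $R$ of polylog size — is the main technical obstacle, and I would address it by a union bound using the a.a.s.\ diameter-$2$ estimate and the fact that $|R| \cdot 4 / n = o(1)$. Second, I must be careful that Lemma~\ref{prob v not dist} was stated for a \emph{uniformly random} vertex $v$, whereas here $w$ ranges over the fixed set $R$; since the distribution of the neighborhood of any particular vertex in $G(n,p)$ is the same, the per-vertex non-distinguishing probability is still $(1+o(1))q$, but I would state this explicitly. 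Once these are in place the computation is the routine substitution above.
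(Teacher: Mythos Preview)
Your overall plan mirrors the paper's: compute the type-2 contribution $(1+o(1))\cdot \tfrac{p^2 n^4}{8}\cdot q^r$ and then substitute $q^r = n^{\eps-4}$. The paper does this in two lines by citing Lemma~\ref{prob v not dist} for $\Prob(A_{e,f}) = (1+o(1))q^r$ and multiplying by the expected number of pairs; you spell out the per-vertex independence across $w\in R$ and the two conditioning caveats more carefully, which is fine and arguably clearer.

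There is, however, a genuine slip in how you discard the type~1 pairs. You assert that the expected number of type~1 pairs, $\mathcal{O}(n^3)$, is ``negligible against the main term $\Theta(p^2 n^\eps)$ since $n^\eps = n^{o(1)}$ grows slower than any power of $n$.'' This reasoning is inverted: $n^\eps = n^{o(1)}$ being small means that $n^3$ is \emph{vastly larger} than $p^2 n^\eps$, not negligible compared to it. The comparison carried out inside the proof of Lemma~\ref{prob v not dist} is a different one --- the type~1 \emph{count} $\mathcal{O}(n^3)$ against the type~2 \emph{count} $\Theta(n^4)$ --- and that only controls the single-vertex non-distinguishing probability. After raising to the $r$th power, a type~1 pair has $\Prob(A_{ab,bc}) = (1+o(1))q_1^r$ with $q_1 = 1 - 2p(1-p)^2 > q$, so the type~1 contribution to $\mu$ is of order $n^3 p^2 q_1^r = n^{\eps-1} p^2 (q_1/q)^r$, and $(q_1/q)^r$ is a genuine positive power of $n$ that for many values of $p$ exceeds $n$. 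Thus type~1 pairs cannot be dismissed by the comparison you make, and the lemma as an exact asymptotic equality is in fact delicate. For the application to Theorem~\ref{lower bound}, though, only the lower bound $\mu \geq (1+o(1)) p^2 n^\eps/8$ is needed in Suen's inequality, and your type~2 computation already delivers that; the clean fix is simply to restrict the sum defining $\mu$ to type~2 pairs and drop the type~1 terms rather than attempt to bound them.
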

\begin{proof}
Using Lemma \ref{prob v not dist}, we can derive that that 
$$\Prob(A_{e, f}) = (1 + o(1))q^r,$$
so, since the expected number of pairs is $(1+o(1))(n^4p^2/8)$, we indeed get
$$\mu = (1 + o(1))n^4p^2 q^r/8.$$
Since 
$r = \frac{(4-\eps)\log(n)}{\log(1/q)}$, 
\begin{gather}
q^r = q^{-(4-\eps)\log_q(n)} = n^{\eps-4}. 
\end{gather}
Thus,
 $$(1 + o(1))n^4p^2q^r/8 = (1 + o(1))n^4p^2 n^{\eps-4} /8  =  (1 + o(1))p^2 n^{\eps} /8.$$
This means that, indeed, 
$$\mu =(1 + o(1))p^2 n^{\eps} /8.$$ 
\end{proof}
Now we estimate $\Delta$ and show the following:
\begin{lemma}[Evaluation of $\Delta$]
\begin{gather*}
\Delta = o(\mu).
\end{gather*}
\end{lemma}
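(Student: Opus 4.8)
The plan is to bound $\Delta = \sum_{(e_1,f_1)\sim(e_2,f_2)} \E[I_{e_1f_1}I_{e_2f_2}]$ by classifying the pairs of pairs of edges according to how much their vertex sets overlap, and to show that each class contributes $o(\mu)$, i.e. $o(p^2 n^\eps/8)$. Write $e_1f_1 = (x_1y_1, z_1t_1)$ and $e_2f_2 = (x_2y_2, z_2t_2)$; adjacency in $\Gamma$ means $\{x_1,y_1,z_1,t_1\}\cap\{x_2,y_2,z_2,t_2\}\neq\emptyset$. I would first count, for a given number $k \in \{1,2,3,4\}$ of shared vertices, how many such configurations there are (roughly $n^{8-k}$ choices of vertices, up to constants), then multiply by the probability that all the relevant edges are present (at most $p$ per required edge, and a type-2 pair needs $2$ edges so a generic adjacent pair-of-pairs needs between $3$ and $4$ edges depending on coincidences), and finally by the conditional probability $\Prob(A_{e_1,f_1}\cap A_{e_2,f_2})$ that $R$ fails to distinguish both pairs.

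The key estimate is the last factor: conditioned on the relevant edges being present, the events $A_{e_1,f_1}$ and $A_{e_2,f_2}$ are each, by the computation underlying Lemma \ref{prob v not dist}, a product over $v\in R$ of per-vertex non-distinguishing probabilities, and these are negatively correlated enough (or at worst, one can bound $\E[I_{e_1f_1}I_{e_2f_2}]\le \Prob(A_{e_1,f_1})\le (1+o(1))q^r = (1+o(1))n^{\eps-4}$ directly, since $A_{e_1,f_1}$ depends only on edges incident to $R$ which are essentially disjoint from the edges forcing $e_1,f_1,e_2,f_2$ to exist, for $n$ large). So a clean way to proceed: for each overlap type, bound the number of configurations times the edge-probability factor times $n^{\eps-4}$. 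The dominant term comes from the sparsest overlap (one shared vertex) with the fewest forced edges; a careful bookkeeping shows this is of order $n^{8-1}\cdot p^{?}\cdot n^{\eps-4} = n^{3+\eps}p^{?}$, and I would check the exponent of $n$ here is strictly less than the exponent $\eps$ appearing in $\mu \sim p^2 n^\eps/8$ — wait, it is not, so the real point must be that sharing a vertex between two type-2 pairs forces enough additional edges (and enough additional constraints among the four edges) that the vertex count drops well below $n^{4+\eps}$. Concretely, two adjacent pairs-of-pairs together involve at most $7$ vertices and at least (generically) $3$ genuinely distinct edges beyond overlaps are not enough — one needs to track that $\E[I_{e_1f_1}I_{e_2f_2}] \le p^{m} q^{r}$ where $m$ is the number of distinct edges among $\{e_1,f_1,e_2,f_2\}$ (at least $3$ when they share a vertex, generically $4$), so the total is at most $\sum_k (\text{configs with } k \text{ shared vertices}) \cdot p^{m(k)} q^r$, and one verifies term by term that this is $o(n^4 p^2 q^r) = o(\mu)$.

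The main obstacle I expect is precisely this casework: one must enumerate the overlap patterns (how the shared vertices sit inside $\{x_i,y_i\}$ versus $\{z_i,t_i\}$, whether a shared vertex forces $e_1 = e_2$ or $f_1 = f_2$ or $e_1 = f_2$, whether two of the four edges coincide, etc.) and in each case simultaneously count vertices, count forced edges, and invoke the bound $\E[I_{e_1f_1}I_{e_2f_2}]\le (1+o(1))q^r$. The bookkeeping is routine in spirit but must be done carefully enough that in every case the combined exponent of $n$ (from vertex count, from powers of $p$, and from $q^r = n^{\eps-4}$) comes out strictly smaller than $\eps$ — equivalently, strictly smaller than the $n^4 p^2$ governing $\mu$. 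I would organize it as: (i) reduce to $v\notin\{$the $\le 7$ vertices$\}$ for each $v\in R$ as in Lemma \ref{prob v not dist}; (ii) handle the "generic" case of exactly one shared vertex and four distinct edges, showing it already gives $o(\mu)$ because the vertex count is $O(n^7)$ while we pay $p^4 q^r$ and $n^7 p^4 q^r = n^{3+\eps}p^4 = o(n^4 p^2 q^r)$ fails — so in fact the correct statement is that this term is comparable, not smaller, and the genuine saving must come from a more refined correlation bound $\E[I_{e_1f_1}I_{e_2f_2}] = (1+o(1))q^{2r - (\text{shared per-vertex structure})}$; (iii) since $q^{2r} = n^{2\eps - 8}$, multiplying by $O(n^7 p^4)$ gives $O(n^{2\eps-1}p^4) = o(p^2 n^\eps)$, which is the desired $o(\mu)$ — so the crux is establishing the near-independence $\E[I_{e_1f_1}I_{e_2f_2}] \le (1+o(1)) q^{2r}$ when the two pairs share only vertices but no edges, and handling separately (with cruder bounds that still suffice, since those configuration classes are smaller) the cases where two of the four edges coincide. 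That near-independence — that conditioning on $A_{e_1,f_1}$ barely affects $A_{e_2,f_2}$ because each is controlled by the edges from $R$ to two nearly-disjoint $4$-sets — is the heart of the argument and where I would spend the most care.
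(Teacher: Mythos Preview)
Your proposal correctly isolates the dominant configuration (two type-$2$ pairs sharing exactly one vertex, seven vertices in all, four distinct edges) and correctly observes that the crude bound $\E[I_{e_1f_1}I_{e_2f_2}]\le (1+o(1))q^r$ is too weak by a factor of $n^3$. The gap is in your proposed fix: the near-independence bound $\E[I_{e_1f_1}I_{e_2f_2}]\le (1+o(1))q^{2r}$ is \emph{false}, not merely delicate. If $x_1$ is the shared vertex, then for every $v\in R$ the two per-vertex events ``$v$ does not distinguish $x_1y_1$ from $z_1t_1$'' and ``$v$ does not distinguish $x_1y_2$ from $z_2t_2$'' both depend on the single Bernoulli variable governing the edge $vx_1$; they are positively correlated, and the per-vertex joint probability is a constant $s_p$ strictly greater than $q^2$ (for instance at $p=1/2$ one has $s_p=13/32>25/64=q^2$). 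Over $r\asymp\log n$ vertices the discrepancy $(s_p/q^2)^r$ is a genuine power of $n$, so no $(1+o(1))$ fudge can rescue the bound. Your two $4$-sets are not ``nearly disjoint'' in the relevant sense: the shared vertex is exactly what creates adjacency in the dependency graph, and it simultaneously creates nontrivial correlation between the two indicators.

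What the paper does instead is compute the per-vertex joint non-distinguishing probability exactly for the one-shared-vertex case, by conditioning on whether $v\sim x_1$ and then tracking the constraints on the remaining six adjacencies; this yields
\[
s_p=(1-p)\bigl((1-p)^3+p^2(2-p)\bigr)^2+p^3(2-p)^2,
\]
and hence $\Delta=(1+o(1))\,p^4n^7 s_p^{\,r}/16$. The comparison $\Delta/\mu\asymp n^3(s_p/q)^r$ is then controlled using $s_p<q$, so that $(s_p/q)^r$ is a negative power of $n$ large enough to kill the $n^3$. Neither of your endpoint bounds ($q^r$ too weak, $q^{2r}$ false) suffices; the argument genuinely requires the intermediate constant $s_p$ and the strict inequality $s_p<q$.
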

\begin{proof}

\begin{claim} In calculating $\Delta$, we may only consider the adjacent pairs  $$ (x_1y_1, z_1t_1), (x_2y_2, z_2t_2) \in \mathcal{I}$$ for which $$|\{ x_1, y_1, z_1, t_1 \} \cap \{ x_2, y_2, z_2, t_2 \}| = 1.$$
\end{claim}
\begin{proof}[Proof of claim]
Consider two adjacent elements of $\mathcal{I}: (x_1y_1, z_1t_1) \sim (x_2y_2, z_2t_2)$. Suppose $| \{x_1, y_1, z_1, t_1, x_2, y_2, z_2, t_2 \}| = 7$. The expected number of such pairs is $$p^4 \frac{n!}{16\cdot(n-7)!} = (1+o(1))p^4 n^7/16.$$ Now consider two adjacent elements of  $\mathcal{I}$ with $|\{ x_1, y_1, z_1, t_1, x_2, y_2, z_2, t_2 \}| \leq 6.$ There are no more than $$n^6 = o(p^4n^7/16)$$ such pairs of pairs. 
\end{proof}

Thus we can and will only consider pairs of elements of $\mathcal{I}$  with only one vertex in common. 

We will now compute the probability that $I_{(x_1y_1, z_1t_1)}I_{(x_1y_2, z_2t_2)} = 1$.
Consider a uniformly random vertex $v$. We can neglect the case when $v \in \{ x_1$, $y_1$, $z_1$, $t_1$, $y_2$, $z_2$, $t_2 \}$ because it happens with probability $o(1)$. Since the random graph has diameter $2$ a.a.s., $I_{(x_1y_1, z_1t_1)}I_{(x_1y_2, z_2t_2)} = 1$ in the following cases:

Case 1: $d_v(x_1) = 1$. Then $v$ has to have distance $1$ to all four edges. $v$ has distance $1$ to $z_1t_1$ (or $z_2t_2$) with probability $p^2 + 2p(1-p) = p(2-p)$, and the distances from $v$ to $y_1, y_2$ don't affect anything, so
 $$\Prob\left( I_{(x_1y_1, z_1t_1)}I_{(x_1y_2, z_2t_2)} = 1 \vert \text{ case 1 holds} \right) = p^3(2-p)^2.$$

Case 2: $d_v(x_1) = 2$. Then $v$ has distance $2$ to both $x_1y_1$ and $z_1t_1$ with probability $(1-p)^3$ and distance $1$ to both $x_1y_1$ and $z_1t_1$ with probability $p^2(2-p)$. So $v$ is equidistant from the two edges with probability $ (1-p)^3 + p^2(2-p)$. Thus, 
$$\Prob\left( I_{(x_1y_1, z_1t_1)}I_{(x_1y_2, z_2t_2)} = 1 \vert \text{ case 2 holds} \right)  = (1-p)((1-p)^3 + p^2(2-p))^2.$$ 

Hence the total probability  $$\Prob\left(I_{(x_1y_1, z_1t_1)}I_{(x_1y_2, z_2t_2)} = 1\right) = (1-p)((1-p)^3 + p^2(2-p))^2 + p^3(2-p)^2 .$$ We will henceforth refer to this constant as $s_p$: $$s_p := (1-p)((1-p)^3 + p^2(2-p))^2 + p^3(2-p)^2. $$ 
It follows that
$$\Delta = (1 + o(1)) p^4 n^7 s_p^r /16.$$ Using (2), we get 
\begin{align*}
\Delta &= (1 + o(1)) p^4 n^7 s_p^r /16 \\
&= (1 + o(1)) p^4n^3 n^{\eps}n^{4-\eps} s_p^r /16 \\
 &= (1 + o(1))\frac{p^2}{2} \left(\frac{s_p}{q}\right)^r \frac{p^2 n^{\eps}}{8} \\
& = (1+o(1))\frac{p^2 }{2}\left(\frac{s_p}{q}\right)^r \mu. 
\end{align*}
Notice that
\begin{align*}
\left(\frac{s_p}{q}\right)^r &=\left(\frac{s_p}{q}\right)^{(4 - \eps)\log(n))/\log(q)} \\
 &= n^{(4 - \eps)\log\left(\frac{s_p}{q}\right)/\log(1/q)}\\
 &= n^{(4 - \eps)\left(\frac{-\log(s_p)}{\log(q)} - 1\right)} \\ 
&= n^{(4 - \eps)(-\log_q s_p - 1)} \leq  n^{\eps-4}.
\end{align*}
Thus, $$  (1+o(1))\frac{p^2 }{2}\left(\frac{s_p}{q}\right)^r \mu \leq (1+o(1))\frac{p^2  n^{\eps - 4}}{2}\mu = o(\mu).$$ This concludes the proof that 
$$\Delta = o(\mu).$$
\end{proof}

Finally, we estimate $\delta$ and show the following:
\begin{lemma}[Evaluation of $\delta$]
\begin{gather*}
\delta = o(1).
\end{gather*}
\end{lemma}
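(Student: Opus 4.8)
The plan is to bound $\delta$ by the product
$$\delta \;\le\; \Big(\max_{(e_1,f_1)\in\mathcal{I}}\#\{(e_2,f_2)\in\mathcal{I}:(e_2,f_2)\sim(e_1,f_1)\}\Big)\cdot\Big(\max_{(e_2,f_2)\in\mathcal{I}}\Prob(A_{e_2,f_2})\Big)$$
and to check that the first factor is $O(n^3)$ while the second is $(1+o(1))q^r=(1+o(1))n^{\eps-4}$, so that $\delta=O(n^{\eps-1})=o(1)$. For the count: fix $(e_1,f_1)=(x_1y_1,z_1t_1)$ with vertex set $U=\{x_1,y_1,z_1,t_1\}$, and, as in the evaluation of $\Delta$, consider only pairs on four distinct vertices, so $(e_2,f_2)=(x_2y_2,z_2t_2)$ is adjacent to $(e_1,f_1)$ exactly when its vertex set meets $U$. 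Just as in the Claim above, at least one of the four vertices of $(e_2,f_2)$ must be chosen from $U$ --- a bounded number of choices, once one records which vertex of $U$ is used and in which slot --- after which at most three further vertices are free, giving $O(n^3)$ adjacent pairs (in fact $(2+o(1))n^3$, since the configurations sharing two or more vertices with $U$ number only $O(n^2)$).

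Next I would establish the uniform estimate $\Prob(A_{e_2,f_2})\le(1+o(1))q^r$ over all $(e_2,f_2)\in\mathcal{I}$. If the vertex set of $(e_2,f_2)$ meets $R$ --- say a vertex of $e_2$ lies in $R$ --- then, since $e_2$ and $f_2$ are disjoint, that coordinate of $R$ is at distance $0$ from $e_2$ and at distance $\ge 1$ from $f_2$, so $d_R(e_2)\neq d_R(f_2)$ deterministically and $\Prob(A_{e_2,f_2})=0$. Hence only pairs whose vertex set is disjoint from $R$ contribute, and for such a pair (four distinct vertices, none in $R$) the bound $\Prob(A_{e_2,f_2})=(1+o(1))q^r$ is precisely the one used in the evaluation of $\mu$: it follows from Lemma \ref{prob v not dist} together with the a.a.s.\ diameter-$2$ property of $G(n,p)$, the per-vertex $o(1)$ correction remaining $1+o(1)$ after being raised to the power $r=\Theta(\log n)$ because the diameter-$2$ event fails with only super-polynomially small probability.

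Combining the two estimates and substituting $q^r=n^{\eps-4}$ from $(2)$,
$$\delta \;\le\; O(n^3)\cdot(1+o(1))\,n^{\eps-4}\;=\;O\!\big(n^{\eps-1}\big)\;=\;o(1),$$
since $\eps=3\log\log(n)/\log(n)\to 0$; note this actually shows $\delta$ to be polynomially small in $n$, more than enough to make the factor $\eps^{2\delta}$ in Suen's inequality equal to $1+o(1)$. I do not anticipate a genuine obstacle here: unlike in the evaluation of $\Delta$, there is no delicate competition of terms --- a single polynomial factor $n^3$ is overwhelmed by $q^r=n^{\eps-4}$. The one step needing slight care is the uniformity of $\Prob(A_{e_2,f_2})\le(1+o(1))q^r$ over all of $\mathcal{I}$; but this is painless once one observes that the probability simply vanishes for pairs touching $R$, so that only the ``generic'' pairs already handled in the evaluation of $\mu$ are relevant.
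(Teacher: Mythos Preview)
Your proof is correct and follows essentially the same route as the paper: bound the number of pairs $(e_2,f_2)$ adjacent to a fixed $(e_1,f_1)$ by $O(n^3)$, bound each $\Prob(A_{e_2,f_2})$ by $(1+o(1))q^r$, and multiply to obtain $\delta=O(n^{\eps-1})=o(1)$. Your treatment is in fact slightly more careful than the paper's in explicitly arguing the uniformity of the bound $\Prob(A_{e_2,f_2})\le(1+o(1))q^r$ (noting that pairs meeting $R$ contribute zero), but the computation and conclusion are identical.
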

\begin{proof}
Note that for fixed $f_1, e_1$, 
\begin{gather*}
\Prob \left(A_{e_2, f_2} \vert \ (e_2, f_2) \text{ uniformly random}, (e_2, f_2) \sim (e_1, f_1)\right) = \\
\Prob(A_{e, f} \vert \ e, f  \text{ uniformly random}).
\end{gather*}
 Thus, the maximum for $\delta$ is achieved for $(e_1, f_1)$ with the largest possible number of adjacent edge pairs $(e_2, f_2)$. Clearly, this number is the greatest when $e_1$ and $f_1$ don't share vertices. The expected number of adjacent edge pairs in this case is $(1 + o(1))4p^2q^rn^3/2 = (1 + o(1))2n^3p^2$. Since $q^r =\Prob(A_{e, f})$ for uniformly random edges $e, f$ we have 
$$2\delta = (1 + o(1))2n^3p^2q^r.$$
Using (2), we get 
$$ \delta=  (1 + o(1))2n^3p^2n^{\eps - 1} = o(1).$$
\end{proof}
We are now ready to prove Theorem \ref{lower bound}.
\begin{proof}[Proof of Theorem \ref{lower bound}]
Substituting the results of Lemma 4.3, 4.4, 4.5  into inequality (1), we obtain 
\begin{align*}
\log\left(\Prob(S=0)\right) &\leq (1+o(1))\left(-\mu + o(\mu)e^{o(1)}\right) \\
&\leq (1 + o(1))\left( -\mu + o(\mu)\right) \\
&\leq -(1+o(1))\mu \\
& \leq -(1 + o(1))p^2n^\eps/8 \\
&\leq -p^2n^\eps/16 
\end{align*}
for sufficiently large $n$.
Then the expected number of edge generating sets of cardinality $r$ is no greater than 
\begin{align*}
{n \choose r}\exp(-p^2n^\eps/16) &\leq n^r \exp(p^2n^\eps/16) \\
&= \mathcal{O}\left(\exp[(4-\eps)\log^2(n)/\log(1/q) - p^2n^\eps/16]\right) \\
&\leq \mathcal{O}\left(\exp[\log^2(n)- \log^3(n)p^2/16]\right) \\
&= o(1).
\end{align*}
This concludes the proof of Theorem \ref{lower bound}.
\end{proof}

\section{Concluding remarks}
We have shown that $$\edim(G(n, p)) = (1 + o(1))\frac{4\log(n)}{\log(1/q)},$$ where $$q = 1 - 2p(1-p)^2(2-p).$$ As demonstrated by Bollobas in \cite{bollobas2012metric}, $$\dim(G(n, p)) = (1 + o(1))\frac{2\log(n)}{\log(1/Q)},$$ where $Q = p^2 + (1-p)^2$. Since $2/\log(1/Q) < 4/\log(1/q)$, this means that
\begin{gather*}
\dim(G(n, p)) <\edim(G(n, p))
\end{gather*}
a.a.s. for all $p \in (0, 1)$.

While random graphs with constant edge probability don't help in resolving the problem of finding more examples of graphs $G$ for which $\edim(G) < \dim(G)$ posed in \cite{2016arXiv160200291K}, perhaps this problem could be addressed with random graphs of non-constant probability $p(n)$. Because of this it would be interesting to calculate  $\edim(G(n, p(n))$ for non-constant $p(n)$. As mentioned earlier, relevant results for $\dim(G(n, p(n)))$ can be found in \cite{bollobas2012metric}.

\section{Acknowledgments}
The research was conducted during the Undergraduate Mathematics Research Program at University of Minnesota Duluth, and supported by grants NSF-1358659 and NSA H98230-16-1-0026. 
I would like to thank Joe Gallian very much for creating a marvelous working environment and all his invaluable support throughout the program. I'm very thankful to thank Matthew Brennan for suggesting the problem; I would also like to thank Levent Alpoge for pointing out the paper \cite{bollobas2012metric}, encouraging me to work on this problem, extremely helpful commentary and remarks, and everlasting patience. 

\bibliographystyle{ieeetr}
\bibliography{RandGraphEdim}{}

\end{document}